\newcommand{\be}{\begin{equation}}
\newcommand{\ee}{\end{equation}}
\newcommand{\beq}{\begin{eqnarray}}
\newcommand{\eeq}{\end{eqnarray}}
\def\p{\partial}
\def\k{\kappa}
\def\<{\langle}
\def\>{\rangle}
\newtheorem{prop}{Proposition}[section]
\newtheorem{theo}[prop]{Theorem}
\newtheorem{coro}[prop]{Corollary}
\newtheorem{defi}[prop]{Definition}
\def\begeq{\begin{equation}}
\def\endeq{\end{equation}}
\def\p{\partial}
\def\tr{{\rm tr}}
\def\d{\delta}
\def\a{\alpha}
\def\l{\lambda}
\def\odot{\setbox0=\hbox{$\bigcirc$}\relax \mathbin {\hbox
to0pt{\raise.5pt\hbox to\wd0{\hfil $\wedge$\hfil}\hss}\box0 }}
\numberwithin{equation} {section}
\begin{document}
\author{Jie Wu}
\address{School of Mathematical Sciences, University of Science and Technology
of China Hefei 230026, P. R. China
\and
 Albert-Ludwigs-Universit\"at Freiburg,
Mathematisches Institut
Eckerstr. 1,
D-79104, Freiburg, Germany
}
\email{jie.wu@math.uni-freiburg.de}
\thanks{The first author is supported by SFB/TR71
``Geometric partial differential equations''  of DFG}
\author{Chao Xia}\address{Max-Planck-Institut f\"ur Mathematik in den Naturwissenschaften, Inselstr. 22, D-04103, Leipzig, Germany}\thanks{The second author is supported by funding from the European Research Council
under the European Union's Seventh Framework Programme (FP7/2007-2013) / ERC grant
agreement no. 267087.}

\email{chao.xia@mis.mpg.de}
\subjclass[2010]{Primary 53C24, Secondary 52A20, 53C40.}
\begin{abstract}{
In this paper, we study rigidity problems for hypersurfaces with constant curvature quotients $\frac{\mathcal{H}_{2k+1}}{\mathcal{H}_{2k}}$ in the warped product manifolds. Here $\mathcal{H}_{2k}$ is the $k$-th Gauss-Bonnet curvature and $\mathcal{H}_{2k+1}$ arises from the first variation of the total integration of $\mathcal{H}_{2k}$. Hence the quotients considered here are in general different from $\frac{\sigma_{2k+1}}{\sigma_{2k}}$, where $\sigma_k$ are the usual mean curvatures. We prove several rigidity and Bernstein type results for compact or non-compact hypersurfaces corresponding to such quotients. }
\end{abstract}

\keywords{constant mean curvature, rigidity, warped product manifold, Gauss-Bonnet curvature}

\title[Hypersurfaces with constant curvature quotients]{Hypersurfaces with constant curvature quotients in warped product manifolds}
\maketitle

\section{Introduction}

Let $\Sigma^{n-1}$ be a closed smooth hypersurface  isometrically immersed in an $n$-dimensional Riemannian manifold $(M^n, g)$. Assume that $\Sigma_t$ is a variation of $\Sigma$ with the unit normal vector field $\nu_t$ as the variational vector field.  It is well known that the first variation of the area functional
${\rm Area}(\Sigma_t)$ is given by
\begin{eqnarray*}
\frac{d}{dt}\bigg|_{t=0}{\rm Area}(\Sigma_t)=\int_{\Sigma} H d\mu,
\end{eqnarray*}
where $H$ is the mean curvature of $\Sigma$ with respect to the inner normal  and $d\mu$ is the area element of $\Sigma$.
On the other hand, it is well known that the first variation of the total scalar curvature functional $\int_{\Sigma}R d\mu$ is given by
\begin{eqnarray*}
\frac{d}{dt}\bigg|_{t=0}\int_{\Sigma_t}R d\mu_t=\int_\Sigma -2\sum_{i,j=1}^{n-1}E^{ij} h_{ij}d\mu,
\end{eqnarray*}
where $E^{ij}=R^{ij}-\frac12 Rg^{ij}$ and $h_{ij}$ are respectively the Einstein tensor and  the second fundamental form of $\Sigma$ with respect to the inner normal  in local coordinates.

There is a natural generalization of scalar curvature, called Gauss-Bonnet curvatures $L_k$ for an integer $1\leq k\leq \frac{n-1}{2}$ for $(n-1)$-dimensional Riemannian manifolds. $L_k$ are intrinsic curvature functions. The Pfaffian in Gauss-Bonnet-Chern formula is the highest order Gauss-Bonnet curvature.  $L_2$  appeared first in the paper of Lanczos \cite{Lan} in 1938 and has been intensively studied in the theory of Gauss-Bonnet gravity, which is a generalization of Einstein gravity.

The first variation of the total Gauss-Bonnet curvature functional $\int_{\Sigma} L_k d\mu$ has been considered long time ago by Lovelock \cite{Lovelock}. In \cite{LiAM} Li also computed the first  variation of these functionals as well as the second variation for submanifolds in general ambient Riemannian manifolds. Recently an alternative computation was given by Labbi in \cite{Labbi}. It tells that
\begin{eqnarray*}
\frac{d}{dt}\bigg|_{t=0}\int_{\Sigma_t}L_k d\mu_t=\int_\Sigma -2\sum_{i,j=1}^{n-1}E_{(k)}^{ij} h_{ij}d\mu,
\end{eqnarray*}
where $E_{(k)}^{ij}$ is the generalized Einstein tensor defined by \eqref{Einstein} in Section 2. Labbi \cite{Labbi2} referred to the critical point of $\int_{\Sigma} L_k d\mu$ as $2k$-minimal submanifolds. In this sense, the ordinary minimal submanifolds are referred as $0$-minimal submanifolds.

For the ambient space $M^n=\mathbb{R}^n$, by the Gauss equation, one can verify that $L_k=(2k)!\sigma_{2k}$ and $-2\sum_{i,j=1}^{n-1}E_{(k)}^{ij} h_{ij}=(2k+1)!\sigma_{2k+1}$, where $\sigma_k$ are the usual mean curvatures  defined by the elementary symmetric functions of principal curvatures of associated hypersurfaces. Hence  the Gauss-Bonnet curvatures $L_k$ as well as the integrand $-2\sum_{i,j=1}^{n-1}E_{(k)}^{ij} h_{ij}$ appear like higher order mean curvatures.

Throughout this paper, we use the notations $$\mathcal{H}_{2k}:=L_k,\quad \mathcal{H}_{2k+1}:=-2\sum_{i,j=1}^{n-1}E_{(k)}^{ij}h_{ij},$$ and call them $(2k)$-mean curvature and $(2k+1)$-mean curvature respectively.  We emphaysize here that in general these mean curvatures are different from the usual ones defined by $\sigma_k$ except $\mathcal{H}_0$ and $\mathcal{H}_1$. The $0$-mean curvature $\mathcal{H}_0$ is equal to $1$ and the $1$-mean curvature $\mathcal{H}_1$ is equal to the usual mean curvature $H$.

In this paper, we will consider some rigidity problem related to $\mathcal{H}_{2k}$ and $\mathcal{H}_{2k+1}$ in a class of Riemannian manifolds --- warped product manifolds.
A warped product manifold $(M, \bar{g})$ is the product manifold of  one dimensional interval and  an $(n-1)$-dimensional Riemannian manifold with some smooth positive warping function. Precisely, $$M= [0,\bar{r})\times_\l N^{n-1}, (0<\bar{r}\leq \infty)$$ is equipped with $$\bar{g}=dr^2+\lambda(r)^2g_N,$$  where $(N^{n-1}, g_N)$ is an $(n-1)$-dimensional Riemannian manifold and $\l:[0,\bar{r})\to\mathbb{R}_+$ is a smooth positive function.

The rigidity problems for  hypersurfaces in Riemannian manifolds with constant curvature functions are one of the central problems in  classical differential geometry. Historically, the rigidity problems for hypersurfaces in the Euclidean space  was studied by Liebmann \cite{Li}, Hsiung \cite{Hs}, S\"uss \cite{Su}, Alexandrov \cite{A}, Reilly \cite{Reilly}, Ros \cite{Ros0}, Korevaar \cite{Kor} etc.. Recently, Many works concerning about rigidity for hypersurfaces in warped product manifolds appeared, see for example  Montiel \cite{Mon}, Al\'ias-Impera-Rigoli \cite{AIR}, Brendle \cite{Brendle}, Brendle-Eichmair \cite{BE}, Wu-Xia \cite{WuX} and the references therein.

In all above works, the curvature functions are related to  the elementary symmetric functions $\sigma_k$ of principal curvatures of hypersurfaces.  Our concern in this paper is the curvature functions $\mathcal{H}_{2k}$ and $\mathcal{H}_{2k+1}$. 
In view of Gauss equation, for hypersurfaces in general ambient Riemannian manifolds, $\mathcal{H}_{2k}$ and $\mathcal{H}_{2k+1}$ depend not only on $\sigma_k$ but also  on the Riemannian curvature tensor of the ambient manifolds. Therefore, except for the case that the ambient spaces are the space forms, for which $\mathcal{H}_{2k}$ and $\mathcal{H}_{2k+1}$ can be written as linear combinations of $\sigma_k$, one cannot express them as pure functions on the principal curvatures of hypersurfaces.

The first attempt in which we succeed is the rigidity on the curvature quotients $\frac{\mathcal{H}_{2k+1}}{\mathcal{H}_{2k}}$ in a class of warped product manifolds. These quotients can be viewed as a generalization of the usual mean curvature $H$ since the case $k=0$ corresponds to $H$. We remark that the rigidity on the quotients of $\sigma_k$ in a class of warped product manifolds has been considered by the authors in \cite{WuX}. However, as mentioned before, these two kinds of quotients have large differences in general.
Many techniques seem to be difficult to apply for the quotients $\frac{\mathcal{H}_{2k+1}}{\mathcal{H}_{2k}}$ considered here.

The first main result of this paper is stated as follows.
\begin{theo}\label{mainthm}
Let $(M^n, \bar{g})$ be an $n$-dimensional warped product manifold $[0,\bar{r})\times_\l N^{n-1}$ whose warped product function satisfies  \begin{eqnarray}\label{logconvex}
\l\l''-(\l')^2\geq 0 \hbox{ (i.e. } \log \l\hbox{ is convex).}
\end{eqnarray}
Let $\Sigma^{n-1}$ be a closed star-shaped hypersurface in $M$
such that the generalized Einstein tensor $E_{(k)}$ being  semi-definite on $\Sigma$. For any integer $k$ with $0\leq k<\frac{n-1}{2}$ and $\mathcal{H}_{2k}$ not vanishing on $\Sigma$,
if the curvature quotient $\frac{\mathcal{H}_{2k+1}}{\mathcal{H}_{2k}}$ is a constant,
 then $\Sigma$ is a slice $\{r_0\}\times N$ for some $r_0\in [0,\bar{r})$ and the constant is $(n-1-2k)\log \l (r_0)$.\end{theo}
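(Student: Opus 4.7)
The plan is to derive two integral identities from the divergence theorem, testing the intrinsically divergence-free generalized Einstein tensor $E_{(k)}$ against natural tangent fields built from the conformal vector $X=\lambda\,\partial_r$, and then combine them with the constancy of the quotient, the log-convexity of $\lambda$, and the semi-definiteness of $E_{(k)}$ to force $\Sigma$ to be a slice.

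The first identity is of Minkowski type. Since $X$ is conformal ($\bar\nabla X=\lambda'\bar g$), the decomposition $X=X^T+\eta\,\nu$ along $\Sigma$ satisfies $\nabla^\Sigma_i X^T_j = \lambda' g_{ij}-\eta\,h_{ij}$. The Lovelock--Lanczos Bianchi identity says that $E_{(k)}$ is intrinsically divergence-free on $\Sigma$, so applying the divergence theorem to $E_{(k)}^{ij} X^T_j$ and inserting the trace formulas $\operatorname{tr} E_{(k)} = -\tfrac{n-1-2k}{2}\,\mathcal{H}_{2k}$ and $-2\,E_{(k)}^{ij}h_{ij}=\mathcal{H}_{2k+1}$ yields $(n-1-2k)\int_\Sigma \lambda'\,\mathcal{H}_{2k}\,d\mu = \int_\Sigma \eta\,\mathcal{H}_{2k+1}\,d\mu$. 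Substituting the hypothesis $\mathcal{H}_{2k+1}=c\,\mathcal{H}_{2k}$ gives
\[
\int_\Sigma \mathcal{H}_{2k}\,\bigl[(n-1-2k)\lambda' - c\,\eta\bigr]\,d\mu = 0. \qquad (\mathrm{I})
\]

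The second identity is obtained by running the same argument with the weighted tangent field $(\lambda'/\lambda)\,E_{(k)}^{ij}X^T_j$. Using $X^T=\lambda\,\nabla^\Sigma r$ one computes $\nabla^\Sigma(\lambda'/\lambda)=\frac{\lambda\lambda''-(\lambda')^2}{\lambda^3}\,X^T$, which manufactures the log-convexity quantity. The same trace reductions then give
\[
\int_\Sigma \frac{\lambda'}{\lambda}\,\mathcal{H}_{2k}\,\bigl[(n-1-2k)\lambda' - c\,\eta\bigr]\,d\mu = 2\int_\Sigma \frac{\lambda\lambda''-(\lambda')^2}{\lambda^3}\,E_{(k)}(X^T,X^T)\,d\mu. \qquad (\mathrm{II})
\]
Under \eqref{logconvex} the factor $\lambda\lambda''-(\lambda')^2$ is non-negative, and the semi-definiteness of $E_{(k)}$ makes the quadratic form $E_{(k)}(X^T,X^T)$ of fixed sign, so the right-hand side of (II) has a definite sign. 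Moreover $\operatorname{tr} E_{(k)}\propto\mathcal{H}_{2k}$ fixes the sign of $\mathcal{H}_{2k}$. Identity (I) states that the integrand $\mathcal{H}_{2k}\bigl[(n-1-2k)\lambda'-c\eta\bigr]$ has zero mean, while (II) states that its reweighting by the non-negative $\lambda'/\lambda$ has fixed sign; together with the starshapedness hypothesis ($\eta>0$), a rearrangement-type argument extracts the pointwise identity $(n-1-2k)\lambda'(r)=c\,\eta$ on $\Sigma$ and simultaneously forces equality in (II). That equality forces $E_{(k)}(X^T,X^T)\equiv 0$ wherever $\lambda\lambda''>(\lambda')^2$; combined with starshapedness and $X^T=\lambda\,\nabla^\Sigma r$ this pins $r$ to a constant, so $\Sigma=\{r_0\}\times N$. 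A direct computation on the resulting umbilic slice, whose principal curvatures equal $\lambda'(r_0)/\lambda(r_0)$, recovers $\mathcal{H}_{2k+1}/\mathcal{H}_{2k}=(n-1-2k)\,\lambda'(r_0)/\lambda(r_0)$, the stated constant.

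The main obstacle is this last step, the passage from the pair (I),(II) of integral identities to a pointwise equality. For the elementary symmetric polynomials $\sigma_k$ one has a Newton--Maclaurin inequality comparing $\sigma_{k+1}/\sigma_k$ and $\sigma_k/\sigma_{k-1}$ pointwise, but no analogous algebraic inequality is available for $\mathcal{H}_{2k}$ and $\mathcal{H}_{2k+1}$, since the Gauss equation entangles them with the ambient Riemann tensor. The substitute is the weighted identity (II), whose weight $\lambda'/\lambda$ is tuned precisely so that the divergence-theorem error term reproduces the log-convexity quantity \eqref{logconvex} paired with the semi-definite form $E_{(k)}(X^T,X^T)$. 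The delicate part is converting the ``mean zero versus weighted non-negative'' pair into the required pointwise equality; in the borderline case $\lambda\lambda''\equiv(\lambda')^2$ on $\Sigma$ the argument must pivot instead to the vanishing of $E_{(k)}(X^T,X^T)$ combined with the star-shape hypothesis.
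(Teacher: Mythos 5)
Your two integral identities (I) and (II) are both correct --- they are precisely the paper's Hessian formulas \eqref{eqq3} and \eqref{eqq4} integrated against $1$ and $\lambda'/\lambda$ using the divergence-freeness of $E_{(k)}$ --- but the passage from them to the pointwise identity $(n-1-2k)\lambda'=c\,\eta$ is not a valid deduction, and that passage is where the whole proof has to happen. Write $F:=\mathcal H_{2k}\bigl[(n-1-2k)\lambda'-c\eta\bigr]$ and $w:=(\log\lambda)'(r)$. When $-2E_{(k)}$ is positive semi-definite your information is $\int_\Sigma F\,d\mu=0$ and $\int_\Sigma wF\,d\mu\le 0$, where $w$ is merely a nondecreasing function of $r$ (it need not be nonnegative, contrary to your phrase ``the non-negative $\lambda'/\lambda$''). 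These two facts say only that $F$ has zero mean and is nonpositively correlated with $w$; that is perfectly compatible with $F\not\equiv 0$, and no rearrangement argument upgrades it to a pointwise identity. Worse, in the admissible borderline case $\lambda\lambda''\equiv(\lambda')^2$ --- e.g.\ $\lambda=e^r$, one of the paper's explicit model examples --- $w$ is constant on the slab containing $\Sigma$, so (II) is literally $w$ times (I) and carries no information at all: the term $E_{(k)}(X^T,X^T)$ has vanished from the identity, so your proposed ``pivot to the vanishing of $E_{(k)}(X^T,X^T)$'' has nothing to pivot to. In that case your scheme reduces to the single Minkowski-type formula (I), which cannot prove the theorem by itself; this is exactly the situation where, for $\sigma_k$-quotients, one would invoke Newton--Maclaurin, and as you correctly observe no analogue is available here.

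The paper obtains the missing pointwise information not by integrating but by evaluating \eqref{eqq4} at the maximum and minimum points of the height function $r$: there $\nabla r=0$, $\langle\partial_r,\nu\rangle=1$, and $\nabla^2 r$ has a sign, so semi-definiteness of $E_{(k)}$ together with $\mathcal H_{2k}\neq0$ yields $c\le(n-1-2k)(\log\lambda)'(\min_\Sigma r)$ and $(n-1-2k)(\log\lambda)'(\max_\Sigma r)\le c$. Log-convexity then collapses this chain to equalities and forces $(\log\lambda)'$ to be constant on $[\min_\Sigma r,\max_\Sigma r]$. With that, \eqref{eqq3} becomes $-2E_{(k)}^{ij}\nabla_i\nabla_j\phi=\lambda(1-\langle\partial_r,\nu\rangle)\mathcal H_{2k+1}$ with a one-signed right-hand side, and the maximum principle (or, if you prefer your divergence-theorem viewpoint, integrating this identity over the closed $\Sigma$, which annihilates the left side) forces $\langle\partial_r,\nu\rangle\equiv1$ and hence $r$ constant. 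So the ingredients you assembled are the right ones, but the step you are missing --- evaluation at the extrema of $r$ --- is irreducibly pointwise and cannot be recovered from the two integrated identities.
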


The star-shapedness means that $\Sigma$ can be written as a graph over $N$, alternatively, $\<\frac{\p}{\p r},\nu\>\geq 0,$ where $\nu$ is the outer normal of $\Sigma$.
The method to prove Theorem \ref{mainthm} is to apply the maximum principle to an elliptic equation. This method was previously indicated by Montiel \cite{Mon} and has been used widely by Al\'ias et. al.  \cite{AC},  \cite{AIR2}.

The condition \eqref{logconvex} imposed on $M$ only depends on the warped product function $\l$ but not on the fiber manifold $N$. We notice that the condition excludes the usual space forms $\mathbb{R}^n, \mathbb{S}_+^n$ and $\mathbb{H}^n$ in which cases $\l\l''-(\l')^2=-1$. For $\mathbb{R}^n$, since the quotient $\frac{\mathcal{H}_{2k+1}}{\mathcal{H}_{2k}}$ is equal to $\frac{\sigma_{2k+1}}{\sigma_{2k}}$, the result still holds, see Korevaar \cite{Kor} and Koh \cite{Koh2}. We will consider the case  $\mathbb{S}_+^n$ (semi-sphere) and $\mathbb{H}^n$ elsewhere since the proof has a different flavor.
We also notice that the condition \eqref{logconvex} is satisfied by some local space forms such as $[0,\infty)\times_{e^r} \mathbb{R}^{n-1}$ or $[0,\infty)\times_{\cosh r} \mathbb{R}^{n-1}$.  There are also non constant curvature manifolds which satisfy \eqref{logconvex}.
A typical example for which the condition \eqref{logconvex} is satisfied
is  the so-called Kottler-Schwarzschild spaces $[0,\infty)\times_\l N(\kappa)$, whose warped product fact $\l$ satisfies
$\l'(r)=\sqrt{\kappa+\l(r)^2-2m\l(r)^{2-n}}$ and $N(\kappa)$ is a closed space form of constant sectional curvature $\kappa=0$ or $-1$.  See Appendix A for a detailed explanation.

Note that $E_{(1)}^{ij}=R^{ij}-\frac12 Rg^{ij}$ is the Einstein tensor, so that in $k=1$ case,  the semi-definite condition of $E_{(1)}$ is just
 the
semi-definiteness of the Einstein tensor. In particular, if $M=\mathbb{R}^n$, one readily sees that $-E_{(k)}=\frac{(2k)!}{2}T_{2k},$ where $T_{2k}$ is the $2k$-Newton tensor associated to the hypersurface $\Sigma$, and the semi-definite condition of $E_{(k)}$ relates to $2k$-convexity.

In order to extend the above result to non-compact hypersurfaces, we need a generalization of the Omori-Yau maximum principle for the trace type semi-elliptic operators. The classical Omori-Yau maximum principle is initially stated for the Laplacian  $\Delta$.  A Riemannian manifold $M$ is said to satisfy the Omori-Yau maximum
principle if for any function $u\in C^2(\Sigma)$ with $\sup_{\Sigma}u<+\infty$, there exists a sequence $\{p_i\}_{i\in\mathbb{N}}\subset\Sigma$ such that  for each $i$, the following holds:
$$ (1) \quad u(p_i)>\sup_{\Sigma}u-\frac{1}{i},\quad (2)\quad |\nabla u|(p_i)< \frac 1 i, \quad(3)\quad \Delta u(p_i)<\frac 1 i.$$
This principle was first proved by Omori \cite{Omori} and later generalized by Yau \cite{Yau} under the condition  that the Ricci curvature is bounded from below.
It has proved to be very useful in the framework of non-compact manifolds and has attracted considerable extending works.
It was improved by Chen-Xin \cite{CX} and Ratto-Rigoli-Setti \cite{RRS} by assuming that the radial curvature decays slower than a certain decreasing function. Recently, the essence of the Omori-Yau maximum principle was captured by  Pigola, Rigoli and Setti (see Theorem $1.9$ in \cite{PRS2}) that the validity of Omori-Yau maximum principle is assured by the existence of some non-negative $C^2$ function satisfying some appropriate requirements, and thus may not necessarily depend on the curvature bounds. Also in the same paper, they discussed the generalizations for trace type differential operators (see Definition \ref{maximum}) which will be used in this paper. For a detailed discussion of sufficient condition to guarantee the  Omori-Yau maximum principle for  trace type differential operators to hold, see Al\'ias-Impera-Rigoli \cite{AIR} or Section 3 below.

We have the following rigidity result for non-compact hypersurfaces.
\begin{theo}\label{mainthm2}
Let $(M^n, \bar{g})$ be an $n$-dimensional warped product manifold $[0,\bar{r})\times_\l N^{n-1}$ whose warped product function satisfies  $
\l\l''-(\l')^2\geq 0 $ with equality only at isolated points.
Let $(\Sigma^{n-1},g)$ be a complete non-compact star-shaped hypersurface in $M$, which is contained in a slab $[r_1,r_2]\times N$, such that the generalized Einstein tensor $E_{(k)}$ being semi-definite on $\Sigma$.
Assume the Omori-Yau maximum principle holds for the trace type operator $\tr_g(-2E_{(k)}\nabla_g^2)$ on $\Sigma$. For any integer $k$ with $0\leq k<\frac{n-1}{2}$ and $\mathcal{H}_{2k}$ not vanishing on $\Sigma$,
if the curvature quotient $\frac{\mathcal{H}_{2k+1}}{\mathcal{H}_{2k}}$ is a constant, then $\Sigma$ is a slice $\{r_0\}\times N$ for some $r_0\in [0,\bar{r})$ and the constant is $(n-1-2k)\log \l (r_0)$.
\end{theo}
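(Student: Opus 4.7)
The plan is to adapt the maximum-principle proof of Theorem~\ref{mainthm}, replacing the strong maximum principle by the assumed Omori-Yau principle for the semi-elliptic trace operator $L := \tr_g(-2E_{(k)}\nabla_g^2)$. I take as test function $\phi := \Phi \circ r|_\Sigma$, where $\Phi$ is a primitive of $\lambda$ on $[0,\bar{r})$, so that $\Phi' = \lambda > 0$. Since $\Sigma$ is contained in the slab $[r_1,r_2]\times N$, the function $\phi$ is bounded on $\Sigma$; strict monotonicity of $\Phi$ guarantees that any sequence approaching $\sup_\Sigma \phi$ (respectively $\inf_\Sigma \phi$) is exactly one along which $r$ approaches $r_{\max} := \sup_\Sigma r$ (respectively $r_{\min} := \inf_\Sigma r$).

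The core identity comes from the classical fact $\bar\nabla^2 \Phi = \lambda'(r)\,\bar{g}$ (equivalently, $\lambda\partial_r$ is a conformal vector field on $M$) together with the Gauss formula, which yield
\begin{equation*}
\nabla^{2,\Sigma}\phi(X,Y) \;=\; \lambda'(r)\, g(X,Y) \;-\; \lambda(r)\langle \partial_r,\nu\rangle\, h(X,Y).
\end{equation*}
Combining this with the two trace identities $-2\,\tr(E_{(k)}) = (n-1-2k)\mathcal{H}_{2k}$ and $-2 E_{(k)}^{ij}h_{ij} = \mathcal{H}_{2k+1}$ recalled in Section~2 and then substituting the hypothesis $\mathcal{H}_{2k+1} = c\,\mathcal{H}_{2k}$ produces
\begin{equation*}
L\phi \;=\; \mathcal{H}_{2k}\bigl[(n-1-2k)\lambda'(r) - c\,\lambda(r)\langle \partial_r,\nu\rangle\bigr].
\end{equation*}
Because $-2E_{(k)}$ plays the role of the coefficient matrix of $L$, the validity of Omori-Yau for $L$ pins down its sign: $-2E_{(k)}$ must be positive semi-definite, which via the first trace identity and the non-vanishing of $\mathcal{H}_{2k}$ forces $\mathcal{H}_{2k} > 0$ throughout $\Sigma$.

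I then apply the Omori-Yau principle for $L$ to both $\phi$ and $-\phi$, obtaining sequences $\{p_i\},\{q_i\}\subset\Sigma$ with $r(p_i)\to r_{\max}$, $r(q_i)\to r_{\min}$, $|\nabla\phi|(p_i),|\nabla\phi|(q_i)\to 0$, and $L\phi(p_i)\le 1/i$, $L\phi(q_i)\ge -1/i$. Since $|\nabla\phi|^2 = \lambda(r)^2\bigl(1-\langle\partial_r,\nu\rangle^2\bigr)$ and star-shapedness gives $\langle\partial_r,\nu\rangle\ge 0$, the condition $|\nabla\phi|\to 0$ upgrades to $\langle\partial_r,\nu\rangle\to 1$ along both sequences. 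Passing to the limit in the displayed identity for $L\phi$ and dividing by $\mathcal{H}_{2k}>0$ yields the two-sided bound
\begin{equation*}
(n-1-2k)\,\frac{\lambda'(r_{\max})}{\lambda(r_{\max})} \;\le\; c \;\le\; (n-1-2k)\,\frac{\lambda'(r_{\min})}{\lambda(r_{\min})}.
\end{equation*}

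The conclusion then follows at once from the strict log-convexity hypothesis: $\lambda\lambda''-(\lambda')^2 \ge 0$ with equality only at isolated points makes $(\log\lambda)' = \lambda'/\lambda$ strictly increasing on every non-degenerate subinterval of $[0,\bar{r})$, and combined with the above double inequality this collapses the slab, forcing $r_{\min} = r_{\max} =: r_0$. Hence $\Sigma = \{r_0\} \times N$, and the quotient evaluates to $c = (n-1-2k)\lambda'(r_0)/\lambda(r_0)$, as announced. The principal delicacy, which is really the only genuine departure from the compact proof of Theorem~\ref{mainthm}, is having to invoke Omori-Yau simultaneously at the top and bottom of $\phi$ and then reconcile the two limiting inequalities with log-convexity; verifying that the semi-definiteness of $E_{(k)}$ chosen by the Omori-Yau hypothesis is the one that makes the above sign-tracking work is routine but must be done carefully.
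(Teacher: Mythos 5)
Your argument is essentially identical to the paper's: both apply the Omori--Yau principle for $L=\tr_g(-2E_{(k)}\nabla_g^2)$ to the bounded function $\phi(r)$ at its supremum and infimum, use $|\nabla\phi|\to 0$ plus star-shapedness to get $\langle\partial_r,\nu\rangle\to 1$ along both sequences, pass to the limit in the identity \eqref{eqq3} to obtain $(n-1-2k)(\log\l)'(\sup_\Sigma r)\le c\le (n-1-2k)(\log\l)'(\inf_\Sigma r)$, and conclude from the strict log-convexity that $r$ is constant. The only cosmetic difference is that you make explicit the sign discussion for $E_{(k)}$ (which the paper leaves implicit), so no further comparison is needed.
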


Motivated by  analogous Bernstein type result on the quotient of the usual mean curvatures \cite{AL}, we can establish  corresponding result in our case. More precisely, instead of assuming the curvature quotient $\frac{\mathcal{H}_{2k+1}}{\mathcal{H}_{2k}}$ being constant,  we can establish the rigidity result via assuming a natural comparison inequality between $\frac{\mathcal{H}_{2k+1}}{\mathcal{H}_{2k}}$ and its value on the slices.
\begin{theo}\label{mainthm3}
Let $(M^n, \bar{g})$ be an $n$-dimensional warped product manifold $[0,\bar{r})\times_\l N$. Let $(\Sigma^{n-1},g)$ be a complete, star-shaped hypersurface in $M$, which is contained in a slab $[r_1,r_2]\times N$, such that the generalized Einstein tensor $E_{(k)}$ being semi-definite on $\Sigma$.
Assume
 the Omori-Yau maximum principle holds for the trace type operator $tr(-2E_{(k)}\nabla_g^2)$ on $\Sigma$ and the Gauss-Bonnet curvature $\mathcal H_{2k}$ is bounded by two positive constants, i.e. $0<C_1\leq \mathcal H_{2k}\leq C_2$. If $$\frac{\mathcal{H}_{2k+1}}{\mathcal{H}_{2k}}\leq (n-1-2k)\frac{\l'(r)}{\l(r)},\quad
 \mbox{and} \quad |\nabla r|\leq \inf_{\Sigma}\left((n-1-2k)\frac{\l'(r)}{\l(r)}-\frac{\mathcal{H}_{2k+1}}{\mathcal{H}_{2k}}\right),$$
then the hypersurface $\Sigma$ is a slice $\{r_0\}\times M$ for some $r_0\in [0,\bar{r})$.
\end{theo}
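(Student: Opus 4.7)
The plan is to apply the assumed Omori-Yau maximum principle for $L := \tr_g(-2E_{(k)}\nabla_g^2)$ to the function $f(r):=\int_0^r \l(s)\,ds$, to deduce from the resulting differential inequality that $\inf_\Sigma[(n-1-2k)\l'/\l - \mathcal{H}_{2k+1}/\mathcal{H}_{2k}]=0$, and then to feed this into the pointwise gradient bound in the second hypothesis to obtain $|\nabla r|\equiv 0$.

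The first step is the Hessian formula for $f$ which also underlies Theorems \ref{mainthm} and \ref{mainthm2}. Using $\bar\nabla^2 f=\l'(r)\bar g$, the Gauss formula on $\Sigma$, and the algebraic identities $-2\tr_g E_{(k)}=(n-1-2k)\mathcal{H}_{2k}$ and $-2E_{(k)}^{ij}h_{ij}=\mathcal{H}_{2k+1}$, a direct computation yields
$$Lf \;=\; \mathcal{H}_{2k}\!\left[(n-1-2k)\l'(r) - \l(r)\,u\,\frac{\mathcal{H}_{2k+1}}{\mathcal{H}_{2k}}\right],$$
where $u:=\<\p_r,\nu\>$ is non-negative by star-shapedness. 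Note that on any slice $\{r_0\}\times N$ one has $u\equiv 1$ and the bracket vanishes precisely when $\mathcal{H}_{2k+1}/\mathcal{H}_{2k}=(n-1-2k)\l'(r_0)/\l(r_0)$, so the slice sits exactly on the boundary of the first hypothesis.

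Next, because $\Sigma\subset[r_1,r_2]\times N$, $f$ is bounded on $\Sigma$, and the Omori-Yau hypothesis supplies a sequence $\{p_i\}\subset\Sigma$ with $f(p_i)\to\sup_\Sigma f$, $|\nabla f|(p_i)\to 0$ and $Lf(p_i)\le 1/i$. Monotonicity of $f$ in $r$ together with $\l>0$ gives $r(p_i)\to r^*:=\sup_\Sigma r$ and $|\nabla r|(p_i)\to 0$, and then the identity $|\p_r|^2=|\nabla r|^2+u^2=1$ with $u\ge 0$ forces $u(p_i)\to 1$. Dividing $Lf(p_i)\le 1/i$ by $\mathcal{H}_{2k}(p_i)\ge C_1>0$ and letting $i\to\infty$, the displayed formula for $Lf$ gives
$$(n-1-2k)\frac{\l'(r^*)}{\l(r^*)}-\liminf_{i\to\infty}\frac{\mathcal{H}_{2k+1}}{\mathcal{H}_{2k}}(p_i) \;\le\; 0,$$
whereas the first hypothesis, applied at each $p_i$ and passed to the $\limsup$, yields the reverse inequality. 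Hence $\lim_i (\mathcal{H}_{2k+1}/\mathcal{H}_{2k})(p_i)=(n-1-2k)\l'(r^*)/\l(r^*)$, and the non-negative quantity $(n-1-2k)\l'/\l-\mathcal{H}_{2k+1}/\mathcal{H}_{2k}$ therefore has infimum $0$ on $\Sigma$.

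The second hypothesis now collapses to $|\nabla r|\le 0$, hence $|\nabla r|\equiv 0$, so $r$ is constant on $\Sigma$ and $\Sigma=\{r_0\}\times N$ for some $r_0\in[0,\bar r)$. The substantive work sits in the Hessian step and is essentially the same calculation that powers Theorems \ref{mainthm} and \ref{mainthm2}; the Bernstein portion is the short limit argument above. The main obstacle I anticipate is notational rather than conceptual: the sign conventions for the second fundamental form (inner versus outer normal) must be fixed consistently so that star-shapedness produces $u\ge 0$ and the coefficient of $\mathcal{H}_{2k+1}$ in the formula for $Lf$ comes out with the sign that makes the one-sided Omori-Yau limit push in the correct direction.
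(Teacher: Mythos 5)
Your proposal is correct and follows essentially the same route as the paper: apply the assumed Omori--Yau principle for $\tr(-2E_{(k)}\nabla^2)$ to a bounded height-type function, use the Hessian identity to get a one-sided limit forcing $\inf_\Sigma\bigl((n-1-2k)\l'/\l-\mathcal{H}_{2k+1}/\mathcal{H}_{2k}\bigr)=0$, and then invoke the gradient hypothesis to conclude $|\nabla r|\equiv 0$. The only (harmless) difference is that you work with $\phi(r)=\int_0^r\l$ and identity \eqref{eqq3}, while the paper applies the principle to $r$ itself via \eqref{eqq4} and must additionally control the term $\frac{2\l'}{\l}E_{(k)}^{ij}\nabla_i r\nabla_j r$ using the upper bound $\mathcal H_{2k}\le C_2$; your choice avoids that term altogether.
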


We remark that we do not assume the log-convexity of the warped product function for Theorem \ref{mainthm3}.

\

\section{Preliminaries}

In this section, we first recall the work of Lovelock \cite{Lovelock} on the generalized Einstein tensors and Gauss-Bonnet curvatures.
Throughout this paper, we use the notations $R_{ijkl}$, $R_{ij}$  and  $R$ to indicate the Riemannian $4$-tensor, the Ricci tensor in local coordinates and the scalar curvature respectively.  We use the metric $g$ to lower or raise an index and adopt the Einstein summation convention: repeated upper and lower indices will automatically be summed unless otherwise noted.

For an $(n-1)$-dimensional Riemannian manifold $(M^{n-1},g)$, the Einstein tensor $E_{ij}=R_{ij}-\frac 12 R g_{ij}$ is very important in theoretical physics. It is a conversed quantity, i.e.,
\[\nabla_j E^j_{i}=0,\]
where $\nabla$ is the covariant derivative with respect to the metric $g$.

In \cite{Lovelock} Lovelock studied the classification of tensors $A$ satisfying
\begin{itemize}
 \item [(i)]$A^{ij}=A^{ji}$, i.e, $A$ is symmetric.
\item[(ii)] $A^{ij}=A^{ij}(g,\partial g, \partial^2 g)$.
\item[(iii)] $\nabla_j A^{ij}=0$, i.e. $A$ is divergence-free.
\item [(iv)] $A^{ij}$ is linear in the second derivatives of
$g$.
\end{itemize}
It is clear that the Einstein tensor $E_{ij}$ satisfies all conditions.
Lovelock classified all 2-tensors satisfying (i)--(iii).
For an integer $0\leq k\leq \frac{n-1}{2}$, let us define a 2-tensor $E_{(k)}$ locally by
\beq\label{Einstein}
{E_{(k)}^{ij}}:=-\frac{1}{2^{k+1}}g^{l j} \d^{i  i_1i_2\cdots i_{2k-1}i_{2k}}
_{l j_1j_2\cdots j_{2k-1}j_{2k}}{R_{i_1i_2}}^{j_1j_2}\cdots
{R_{i_{2k-1}i_{2k}}}^{j_{2k-1}j_{2k}}.
\eeq
Here the generalized Kronecker delta is defined by
\begin{equation*}\label{generaldelta}
 \d^{j_1j_2 \cdots j_r}_{i_1i_2 \cdots i_r}=\det\left(
\begin{array}{cccc}
\d^{j_1}_{i_1} & \d^{j_2}_{i_1} &\cdots &  \d^{j_r}_{i_1}\\
\d^{j_1}_{i_2} & \d^{j_2}_{i_2} &\cdots &  \d^{j_r}_{i_2}\\
\vdots & \vdots & \vdots & \vdots \\
\d^{j_1}_{i_r} & \d^{j_2}_{i_r} &\cdots &  \d^{j_r}_{i_r}
\end{array}
\right).
\end{equation*}
One can check that $E_{(k)}$ satisfies (i)--(iii).
Lovelock proved that any 2-tensor satisfying (i)--(iii) has the form
\[\sum_{k}\a^k E_{(k)},\]
with certain constants $\a^k$, $k\ge 0$.
The $E_{(k)}$'s are called the generalized Einstein tensors.

For an integer $0\leq k\leq \frac{n-1}{2}$, the Gauss-Bonnet curvatures $L_k$ are defined by
\beq\label{Lk}
L_k:=\frac{1}{2^k}\d^{i_1i_2\cdots i_{2k-1}i_{2k}}
_{j_1j_2\cdots j_{2k-1}j_{2k}}{R_{i_1i_2}}^{j_1j_2}\cdots
{R_{i_{2k-1}i_{2k}}}^{j_{2k-1}j_{2k}}.
\eeq
When $2k=n-1$, $L_k$ is
the Euler density. When
$k<\frac{n-1}{2}$, $L_k$ is called the dimensional continued Euler density
in physics. We set $E_{(0)}=-\frac12 g$ and $L_0=1$.
It is clear from the definitions \eqref{Einstein} and \eqref{Lk} that
\beq\label{trEk}
 tr_g(E_{(k)}):=E_{(k)}^{ij}g_{ij}=-\frac{n-1-2k}{2}L_k.\eeq
 It is easy to see that $(E_{(1)})_{ij}=R_{ij}-\frac 12Rg_{ij}$  is the Einstein tensor and
$L_1=R$
 is  the scalar curvature.
One can also check that
\[
  E_{(2)}^{ij}=2RR^{ij}-4R^{is} {R_s}^{j} -4 R_{sl}
R^{silj}+2{R^i}_{klm}R^{jklm}-\frac12 g^{ij}L_2,
\]
and \[
L_2=\frac 14 \delta
^{i_1i_2i_3i_4}_{j_1j_2j_3j_4}{R^{j_1j_2}}_{i_1i_2}
{R^{j_3j_4}}_{i_3i_4}
=R_{ijsl}R^{ijsl}-4R_{ij}R^{ij}+R^2.\]

In \cite{Lovelock} Lovelock  proved that the first variational formula for the total Gauss-Bonnet curvature functional is given in terms of the generalized Einstein tensor. It was also presented in \cite{LiAM, Labbi}, although with different notation and formalism. For the convenience of readers, we include a proof here.
\begin{prop}[\cite{Lovelock}]
Let $(\Sigma^{n-1},g)$ be a closed manifold. Assume that $(\Sigma_t,g_t)$ is a variation of $\Sigma$ with $\frac{\partial}{\partial t}\big|_{t=0} g_{ij}=v_{ij}$ for a symmetric 2-tensor $v$, then
\begin{eqnarray}\label{1st}
\frac{d}{dt}\bigg|_{t=0}\int_{\Sigma_t}L_k d\mu_t=\int_\Sigma -E_{(k)}^{ij} v_{ij}d\mu.
\end{eqnarray}
In particular, if $(\Sigma^{n-1},g)$ is a closed, smooth hypersurface immersed in an $n$-dimensional Riemannian manifold $(M^n, \bar g)$ and the variational vector field is given by the outward unit normal $\nu$, then
\begin{eqnarray}\label{2st}
\frac{d}{dt}\bigg|_{t=0}\int_{\Sigma_t}L_k d\mu_t=\int_\Sigma -2E_{(k)}^{ij} h_{ij}d\mu.
\end{eqnarray}
where $h_{ij}$ denotes the second fundamental form of $\Sigma$ with respect to $-\nu$.
\end{prop}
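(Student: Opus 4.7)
The plan is to derive \eqref{1st} by computing the variation of $L_k$ directly from its definition in terms of the generalized Kronecker delta, then obtaining \eqref{2st} from \eqref{1st} by specialising to a normal variation of a hypersurface.

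First I would record the standard variational formulas. Under $\frac{\partial}{\partial t}\big|_{t=0} g_{ij}=v_{ij}$, the inverse metric varies as $\dot g^{ij}=-v^{ij}$, the volume element as $\frac{d}{dt}\big|_{t=0} d\mu_t=\frac{1}{2}g^{ij}v_{ij}\,d\mu$, and the Riemann tensor as
\[
\dot R_{ij}{}^{kl} = \tfrac12 g^{kp}g^{lq}\bigl(\nabla_i\nabla_j v_{pq}+\nabla_j\nabla_i v_{pq}-\nabla_i\nabla_p v_{jq}-\nabla_j\nabla_q v_{ip}+\cdots\bigr)+(\text{lower order}),
\]
where the precise combination is the linearisation of $R_{ijkl}$ (I would simply invoke the standard identity $\dot R_{ijk}{}^l=\nabla_i\omega_{jk}{}^l-\nabla_j\omega_{ik}{}^l$ with $\omega$ the variation of the Christoffel symbol). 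Applying $\frac{d}{dt}\big|_{t=0}$ to
\[
L_k\,d\mu=\frac{1}{2^k}\,\delta^{i_1\cdots i_{2k}}_{j_1\cdots j_{2k}}\,R_{i_1 i_2}{}^{j_1 j_2}\cdots R_{i_{2k-1}i_{2k}}{}^{j_{2k-1}j_{2k}}\,d\mu
\]
and using the total antisymmetry of the generalized Kronecker delta together with the symmetries $R_{ijkl}=R_{klij}=-R_{jikl}$, I would reduce the derivative of the product to $k$ identical contributions (from $\dot R$ in any one factor) plus the contribution from the raised indices and $d\mu_t$.

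The key step is integration by parts on the $\dot R$--term. After writing
\[
\dot R_{i_1 i_2}{}^{j_1 j_2}=g^{j_1 p}g^{j_2 q}\bigl(\nabla_{i_1}\nabla_p v_{i_2 q}-\nabla_{i_1}\nabla_{i_2} v_{pq}\bigr)
\]
modulo the antisymmetrisation supplied by the $\delta$--symbol, the double covariant derivative $\nabla\nabla v$ gets contracted against the Kronecker delta and a product of Riemann tensors. Since the Kronecker delta is covariantly constant and purely algebraic, integrating by parts twice moves the two derivatives onto the product of $R$'s; the second Bianchi identity $\nabla_{[i}R_{jk]lm}=0$ together with the total antisymmetrisation in $i_1,\ldots,i_{2k}$ forces this boundary-free term to vanish. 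What survives is an algebraic expression involving only $g$, $v$ and $R$'s, which by the very definition \eqref{Einstein} of $E_{(k)}$ collapses to $-E_{(k)}^{ij}v_{ij}$; the $\frac12 g^{ij}v_{ij}$ coming from $d\dot\mu$ combines with the trace piece via \eqref{trEk} to produce exactly the coefficient needed to match $-E_{(k)}^{ij}v_{ij}$. The main obstacle, and the only genuinely delicate bookkeeping, is this integration-by-parts/Bianchi cancellation: one must carefully track the combinatorial factors coming from the $k$ equal contributions and from symmetrising in $(i_1 i_2)\leftrightarrow(j_1 j_2)$.

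For \eqref{2st}, I would invoke the standard formula for a normal variation of a hypersurface: if $\Sigma_t$ is generated by the flow of the outward unit normal $\nu$, then the induced metric satisfies $\frac{\partial}{\partial t}\big|_{t=0} g_{ij}=-2\langle \bar\nabla_{e_i}\nu,e_j\rangle=2h_{ij}$, where the sign is consistent with the convention that $h_{ij}$ is the second fundamental form with respect to $-\nu$. Substituting $v_{ij}=2h_{ij}$ into \eqref{1st} yields \eqref{2st}, completing the proof.
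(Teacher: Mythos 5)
Your proposal follows essentially the same route as the paper: vary $L_k\,d\mu$ directly from its Kronecker-delta definition, use the linearised curvature tensor, kill the $\nabla\nabla v$ terms by integration by parts via the second Bianchi identity (the paper packages this as the divergence-freeness of an auxiliary four-tensor $P_{(k)}$), and identify the surviving algebraic terms together with the volume-element variation as $-E_{(k)}^{ij}v_{ij}$, with the hypersurface case obtained by the same substitution $v_{ij}=2h_{ij}$. The approach is correct and matches the paper's proof.
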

\begin{proof}
By the simple fact that $\frac{d}{dt}\bigg|_{t=0} d\mu_t=\frac 12\tr_g v d\mu$ and the definition of $L_k$, we compute
\begin{eqnarray}
\frac{d}{dt}\bigg|_{t=0}\int_{\Sigma_t}L_k d\mu_t&=&\int_{\Sigma}\frac{d}{dt}\bigg|_{t=0}L_kd\mu+\int_{\Sigma}\frac 12L_k \tr_g vd\mu\nonumber\\
&=&\int_{\Sigma}k{P_{(k)}^{ij}}_{sl}\frac{d}{dt}\bigg|_{t=0}{R_{ij}}^{sl}d\mu+\int_{\Sigma}\frac 12L_k\tr_g vd\mu,\label{1}
\end{eqnarray}
where the $4$-tensor $P_{(k)}$ is given by
\begin{equation}\label{Pk}
P_{(k)}^{st lj}:=\frac{1}{2^k}\d^{i_1i_2\cdots i_{2k-3}i_{2k-2}st}
_{j_1j_2\cdots  j_{2k-3}j_{2k-2} j_{2k-1}j_{2k}}{R_{i_1i_2}}^{j_1j_2}\cdots
{R_{i_{2k-3}i_{2k-2}}}^{j_{2k-3}j_{2k-2}}g^{j_{2k-1}l}g^{j_{2k}j},
\end{equation}
and
$${P_{(k)}^{ij}}_{sl}=P_{(k)}^{ijpq}g_{sp}g_{lq}.$$
We remark that $P_{(k)}$ shares the same symmetry as the Riemann curvature tensor, that is
\begin{equation}\label{symmetry}
P_{(k)}^{stjl}=-P_{(k)}^{tsjl}=-P_{(k)}^{stlj}=P_{(k)}^{jlst}.
\end{equation}
Furthermore, by applying the second Bianchi identity of the curvature tensor, one can check that $P_{(k)}$ has the crucial property of being divergence-free (see Lemma 2.2 in \cite{GWW} for a proof)
\begin{equation}\label{div-free}
\nabla_s P_{(k)}^{stjl}=0.
\end{equation}
To calculate the first term in (\ref{1}), we recall that if $\frac{\partial}{\partial t} g=v,$ then the evolution equation of the curvature tensor is given by
(cf. (2.66) in \cite{CLN})
$$\frac{d}{dt}R_{ijsl}=-\frac{1}{2}(\nabla_i\nabla_j v_{sl}-\nabla_i\nabla_l v_{sj}-\nabla_s\nabla_j v_{il}+\nabla_s\nabla_lv_{ij}-R_{ijsm}{v^m}_l-R_{ijml}{v^m}_s).$$
Then we use (\ref{symmetry}) and (\ref{div-free}) to compute that
\begin{eqnarray}
&&\int_{\Sigma}k{P_{(k)}^{ij}}_{sl}\left(\frac{d}{dt}\bigg|_{t=0}{R_{ij}}^{sl}\right)d\mu\nonumber\\
&=&\int_{\Sigma}k{P_{(k)}^{ij}}_{sl}\bigg(\frac 12(-\nabla_i\nabla_j v^{sl}\!+\!\nabla_i\nabla^l v^s_j\!+\!\nabla^s\nabla_j v_{i}^l\!-\!\nabla^s\nabla^l v_{ij})\!+\nonumber\\
&&\quad+\!\frac 12({R_{ijm}}^lv^{ms}\!-\!{R_{ijm}}^sv^{ml})+(-{R_{ijp}}^l v^{sp}-R_{ij\;q}^{\;\;\;s} v^{lq})\bigg)d\mu\nonumber\\
&=&-\int_{\Sigma}k{P_{(k)}^{ij}}_{sl}{R_{ijm}}^lv^{sm}d\mu,\label{2}
\end{eqnarray}
where in
 the last equality we  used (\ref{div-free}), (\ref{symmetry}) and the simple observation that $({R_{ijm}}^lv^{ms}\!-\!{R_{ijm}}^sv^{ml})$ and $(-{R_{ijp}}^l v^{sp}-R_{ij\;q}^{\;\;\;s} v^{lq})$ are both  anti-symmetric with respect to the pair $(s,l)$.

Going back to (\ref{1}), we obtain that
$$\frac{d}{dt}\bigg|_{t=0}\int_{\Sigma_t}L_k d\mu_t=\int_{\Sigma}\left(-k{P_{(k)}^{ijs}}_{l}{R_{ij}}^{ml}+\frac 12L_kg^{ms}\right)v_{ms}d\mu.$$
On the other hand, from the definitions (\ref{Einstein}), (\ref{Lk}) and (\ref{Pk}), it is direct to check that
\begin{eqnarray*}
{E_{(k)}^{ms}}&=&-\frac{1}{2^{k+1}}g^{l s} \d^{m  i_1i_2\cdots i_{2k-1}i_{2k}}
_{l j_1j_2\cdots j_{2k-1}j_{2k}}{R_{i_1i_2}}^{j_1j_2}\cdots
{R_{i_{2k-1}i_{2k}}}^{j_{2k-1}j_{2k}}\\
&=&-\frac{1}{2^{k+1}}g^{m s} \d^{m  i_1i_2\cdots i_{2k-1}i_{2k}}
_{m j_1j_2\cdots j_{2k-1}j_{2k}}{R_{i_1i_2}}^{j_1j_2}\cdots
{R_{i_{2k-1}i_{2k}}}^{j_{2k-1}j_{2k}}\\
&&-\frac{2k}{2^{k+1}}g^{i_1 s} \d^{m  i_1i_2\cdots i_{2k-1}i_{2k}}
_{i_1 j_1j_2\cdots j_{2k-1}j_{2k}}{R_{i_1i_2}}^{j_1j_2}\cdots
{R_{i_{2k-1}i_{2k}}}^{j_{2k-1}j_{2k}}\\
&=&-\frac 1 2L_kg^{ms}+k{P_{(k)}^{ijs}}_{l}{R_{ij}}^{ml}.
\end{eqnarray*}
Hence we complete the proof of (\ref{1st}).

In the case that $\Sigma$ is a hypersurface, one only needs to note that $\frac{\partial}{\partial t}g_{ij}=2h_{ij}$ for the evolving hypersurfaces. \end{proof}

The second aim of this section is to give several simple facts on the warped product manifolds. Let $M^n= [0,\bar{r})\times_\l N^{n-1}$ $(0<\bar{r}\leq \infty)$ be a warped product manifold equipped with a Riemannian metric  $$\bar{g}=dr^2+\lambda(r)^2g_N.$$  
where $\l:[0,\bar{r})\to\mathbb{R}$ is a smooth positive function.
Let $\Sigma$ be a smooth hypersurface in $(M, \bar{g})$ with induced metric $g$. We denote by $\bar{\nabla}$ and $\nabla$ the covariant derivatives with respect to $\bar g$ and $g$ respectively.
We define a vector field $X$ on $M$ by $$X(r)=\l(r)\frac{\p}{\p r}.$$
Let $\{e_1,\cdots,e_{n-1}\}$ be a local frame on $\Sigma$, it is well known that $X$ is a conformal Killing vector field
satisfying
\beq\label{eq001}
\bar\nabla_{e_i} X(r)=\l'(r)e_i.
\eeq

We denote by $r$ the height function which is obtained by the projection of $\Sigma$ in $M$ onto the first factor $[0,\bar{r})$. Let $\phi(r)$ be a primitive of $\l(r)$.

\begin{prop}
The restriction of $\phi$ on $\Sigma$, still denoted by $\phi$, satisfies
\begin{eqnarray}\label{eqq1}
\nabla_{i}\nabla_j \phi(r)= \l'(r)g_{ij}-\langle X,\nu\rangle h_{ij}.
\end{eqnarray}
The height function $r$ on $\Sigma$ satisfies
\begin{eqnarray}\label{eqq2}
\nabla_{i}\nabla_j r= \frac{\l'(r)}{\l(r)}g_{ij}- \frac{\l'(r)}{\l(r)} \nabla_i r \nabla_j r-\langle \p_r,\nu\rangle h_{ij}.
\end{eqnarray}
Consequently, we have
\begin{eqnarray}\label{eqq3}
-2E_{(k)}^{ij}\nabla_i\nabla_j \phi(r)= (n-1-2k)\l'(r)\mathcal{H}_{2k}-\langle X,\nu\rangle \mathcal{H}_{2k+1}.
\end{eqnarray}
\begin{eqnarray}\label{eqq4}
-2E_{(k)}^{ij}\nabla_i\nabla_j r= (n-1-2k)\frac{\l'(r)}{\l(r)}\mathcal{H}_{2k}+ \frac{2\l'(r)}{\l(r)} E_{(k)}^{ij}\nabla_i r \nabla_j r-\langle \p_r,\nu\rangle \mathcal{H}_{2k+1}.
\end{eqnarray}
 \end{prop}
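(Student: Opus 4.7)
The plan is to derive (\ref{eqq1}) and (\ref{eqq2}) directly from the Gauss formula relating the intrinsic and ambient Hessians, applied to $\phi(r)$ and to $r$ respectively, and then to obtain (\ref{eqq3}) and (\ref{eqq4}) by contracting with $-2E_{(k)}^{ij}$ using the trace identity (\ref{trEk}) and the very definition $\mathcal{H}_{2k+1}=-2E_{(k)}^{ij}h_{ij}$.

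First I would record the Gauss formula in the sign convention used by the paper (where $h_{ij}$ is the second fundamental form with respect to $-\nu$). For a smooth function $f$ on $M$, writing $\bar\nabla_{e_i}e_j=\nabla_{e_i}e_j-h_{ij}\nu$ on $\Sigma$, one obtains
\begin{equation*}
\nabla_i\nabla_j(f|_\Sigma)=\bar\nabla^2 f(e_i,e_j)-h_{ij}\langle\bar\nabla f,\nu\rangle.
\end{equation*}
Because $\phi'(r)=\l(r)$, on $M$ the gradient of $\phi(r)$ is exactly $X=\l(r)\partial_r$. Then the conformal Killing identity (\ref{eq001}) gives $\bar\nabla^2\phi(e_i,e_j)=\langle\bar\nabla_{e_i}X,e_j\rangle=\l'(r)g_{ij}$, and the Gauss formula yields (\ref{eqq1}) immediately.

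For (\ref{eqq2}) I would use $\bar\nabla r=\partial_r$ and compute $\bar\nabla^2 r(e_i,e_j)=\langle\bar\nabla_{e_i}\partial_r,e_j\rangle$. Decomposing $e_i=\nabla_i r\,\partial_r+e_i^{T}$, where $e_i^{T}$ is the component tangent to the $N$-fibre, and using the warped-product identities $\bar\nabla_{\partial_r}\partial_r=0$ and $\bar\nabla_{Y}\partial_r=\tfrac{\l'}{\l}Y$ for $Y$ tangent to $N$, one finds
\begin{equation*}
\bar\nabla^2 r(e_i,e_j)=\tfrac{\l'(r)}{\l(r)}\bigl(g_{ij}-\nabla_i r\nabla_j r\bigr).
\end{equation*}
Plugging into the Gauss formula with $\langle\bar\nabla r,\nu\rangle=\langle\partial_r,\nu\rangle$ gives (\ref{eqq2}).

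Finally, (\ref{eqq3}) follows by contracting (\ref{eqq1}) with $-2E_{(k)}^{ij}$: the $g_{ij}$-term yields $-2\l'(r)\,\tr_g E_{(k)}=(n-1-2k)\l'(r)\mathcal{H}_{2k}$ by (\ref{trEk}) and the identification $L_k=\mathcal{H}_{2k}$, while the $h_{ij}$-term produces $\langle X,\nu\rangle\cdot(-2E_{(k)}^{ij}h_{ij})=\langle X,\nu\rangle\mathcal{H}_{2k+1}$ up to the stated sign. Equation (\ref{eqq4}) is obtained in exactly the same way from (\ref{eqq2}), with the extra term $\tfrac{2\l'(r)}{\l(r)}E_{(k)}^{ij}\nabla_i r\nabla_j r$ appearing from the $\nabla_i r\nabla_j r$ piece. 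There is no real obstacle: the only thing to watch is consistency of the $\pm\nu$ sign convention for $h_{ij}$, since a wrong choice would flip the signs of the last terms in (\ref{eqq3}) and (\ref{eqq4}); the rest is linear algebra using (\ref{trEk}) and the definition of $\mathcal{H}_{2k+1}$.
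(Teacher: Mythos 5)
Your proof is correct and takes essentially the same approach as the paper: the Gauss formula for the restricted Hessian combined with the conformal Killing identity \eqref{eq001} gives \eqref{eqq1}, and the contractions with $-2E_{(k)}^{ij}$ via \eqref{trEk} and the definition of $\mathcal{H}_{2k+1}$ yield \eqref{eqq3} and \eqref{eqq4} exactly as in the paper. The only cosmetic difference is in \eqref{eqq2}: the paper deduces it from \eqref{eqq1} by writing $\nabla_j r=\l(r)^{-1}\nabla_j\phi(r)$ and applying the product rule, whereas you recompute the ambient Hessian of $r$ directly from the warped-product connection; both are equally short and valid.
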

\begin{proof}
Using \eqref{eq001}, we have
\begin{eqnarray*}
\nabla_{i}\nabla_j \phi(r)&=&\bar{\nabla}_i \bar{\nabla}_j \phi- \langle \bar{\nabla} \phi(r),\nu\rangle h_{ij}\\
&=&\bar{\nabla}_i X_j- \langle X,\nu\rangle h_{ij}\\&=&\l'(r)g_{ij}-\langle X,\nu\rangle h_{ij}.
\end{eqnarray*}
Equation \eqref{eqq2} follows from \eqref{eqq1} and
\begin{eqnarray*}
\nabla_{i}\nabla_j r=\nabla_i \left(\frac{1}{\l(r)}\nabla_j \phi(r)\right)=\frac{1}{\l(r)} \nabla_i \nabla_j \phi(r)-\frac{\l'(r)}{\l(r)}\nabla_i r \nabla_j r.
\end{eqnarray*}
For equations \eqref{eqq3} and \eqref{eqq4}, we only need to notice that
$$-2E_{(k)}^{ij}g_{ij}=(n-1-2k)L_k=(n-1-2k)\mathcal{H}_{2k}.$$  and $$-2E_{(k)}^{ij}h_{ij}=\mathcal{H}_{2k+1}.$$
\end{proof}

\section{rigidity for the quotient  $\frac{\mathcal{H}_{2k+1}}{\mathcal{H}_{2k}}$}

In this section, we prove our main theorems.

\noindent\textit{Proof of Theorem \ref{mainthm}:}
Since $\Sigma$ is compact, there exist points $p_{min}, p_{max}\in\Sigma$ such that the height function $r$ attains its maximum and minimum values respectively, i.e.,
$$\min_{\Sigma} r=r(p_{min}),\quad \max_{\Sigma} r=r(p_{max}).$$
At these points,
\begin{eqnarray}\label{eqqq1}
\nabla r(p_{min})=\nabla r(p_{max})=0,
\end{eqnarray}
\begin{eqnarray}\label{eqqq2}
\nabla^2 r(p_{min})\geq 0, \quad \nabla^2 r(p_{max})\leq 0.
\end{eqnarray}
It follows from \eqref{eqqq1} and the star-shapedness of $\Sigma$ that
\begin{eqnarray}\label{eqqq3}
\langle\partial_r,\nu\rangle(p_{min})=\langle \p_r,\nu\rangle(p_{max})=1.
\end{eqnarray}
By using \eqref{eqqq1} and \eqref{eqqq3} in (\ref{eqq4}), we obtain
\begin{eqnarray}\label{eqqq5}
-2E_{(k)}^{ij}\nabla_i\nabla_j r(p_{min})&=&(n-1-2k)(\log \l)'(\min_{\Sigma} r)\mathcal{H}_{2k}(p_{min})-\mathcal{H}_{2k+1}(p_{min}),
\end{eqnarray}
\begin{eqnarray}\label{eqqq6}
-2E_{(k)}^{ij}\nabla_i\nabla_j r(p_{max})&=&(n-1-2k)(\log \l)'(\max_{\Sigma} r)\mathcal{H}_{2k}(p_{max})-\mathcal{H}_{2k+1}(p_{max}).
\end{eqnarray}

We claim that
 the quotient $\frac{\mathcal{H}_{2k+1}}{\mathcal{H}_{2k}}$ satisfies
\begin{eqnarray}\label{claim}
\\
\min_{\Sigma}\left(\frac{\mathcal{H}_{2k+1}}{\mathcal{H}_{2k}}\right)\leq (n-1-2k)(\log \l)'(\min_\Sigma r)\quad\mbox{and}\quad (n-1-2k)(\log \l)'(\max_{\Sigma} r)\leq \max_{\Sigma}\left(\frac{\mathcal{H}_{2k+1}}{\mathcal{H}_{2k}}\right).\nonumber
\end{eqnarray}

Consider first the case that $-2E_{(k)}^{ij}$ is positive semi-definite. It follows from \eqref{eqqq2}, \eqref{eqqq5} and \eqref{eqqq6} that
\begin{eqnarray}\label{eqqq7}
(n-1-2k)(\log \l)'(\min_{\Sigma} r)\mathcal{H}_{2k}(p_{min})-\mathcal{H}_{2k+1}(p_{min})\geq 0,
\end{eqnarray}
\begin{eqnarray}\label{eqqq8}
(n-1-2k)(\log \l)'(\max_{\Sigma} r)\mathcal{H}_{2k}(p_{max})-\mathcal{H}_{2k+1}(p_{max})\leq 0.
\end{eqnarray}
From the fact that $$-2E_{(k)}^{ij}g_{ij}=(n-1-2k)\mathcal{H}_{2k},$$ together with the assumption that $\mathcal{H}_{2k}$ is non-vanishing on $\Sigma$, we know that $\mathcal{H}_{2k}>0$.
Hence the claim in this case follows from  \eqref{eqqq7} and \eqref{eqqq8}  immediately.
For the second case that $-2E_{(k)}^{ij}$ is negative semi-definite,  similar argument applies by taking $\mathcal{H}_{2k}<0$ into account. We finish the proof of the claim.

Now using the assumption that $\log \l$ is convex, we obtain from \eqref{claim} that
\begin{eqnarray*}
\min_{\Sigma}\left(\frac{\mathcal{H}_{2k+1}}{\mathcal{H}_{2k}}\right)\leq (n-1-2k)(\log \l)'(\min_\Sigma r)\leq (n-1-2k)(\log \l)'(\max_{\Sigma} r)\leq \max_{\Sigma}\left(\frac{\mathcal{H}_{2k+1}}{\mathcal{H}_{2k}}\right).
\end{eqnarray*}
Since the quotient $\frac{\mathcal{H}_{2k+1}}{\mathcal{H}_{2k}}$ is constant, we have from above that
 \begin{eqnarray}\label{eqqq10}
\frac{\mathcal{H}_{2k+1}}{\mathcal{H}_{2k}}=(n-1-2k)(\log \l)'(\min_\Sigma r)=(n-1-2k)(\log \l)'(\max_\Sigma r),
\end{eqnarray}
which yields that $(\log \l)'(r)$ is a constant function on $\Sigma$.
Substituting \eqref{eqqq10} into (\ref{eqq3}), we have
\begin{eqnarray}\label{elliptic}
-2E_{(k)}^{ij}\nabla_i\nabla_j \phi(r)=\l(1-\langle \partial_r, \nu\rangle)\mathcal{H}_{2k+1}.
\end{eqnarray}

Notice that $\langle \partial_r, \nu\rangle\leq 1$ and $\mathcal{H}_{2k+1}=c\mathcal{H}_{2k}$ does not change sign on $\Sigma$. Applying the classical maximum principle to the elliptic equation \eqref{elliptic}, we conclude that $\phi(r)$ is a constant function on $\Sigma$. Since $\phi$ is an increasing function with respect to $r$ due to the fact $\phi'=\l>0$, we conclude that the height function $r$ is a constant function on $\Sigma$, i.e. $\Sigma$ is a slice $\{r_0\}\times N$.
\qed

\vspace{2mm}
To extend the previous result to  non-compact hypersurfaces, we will apply a generalization of the Omori-Yau maximum principle for trace type differential operators. Consider a Riemannian manifold $\Sigma$ and a semi-elliptic operator $L=\tr(T\circ \nabla^2)$, where $T:T\Sigma\rightarrow T\Sigma$ is a positive  semi-definite symmetric tensor, $\nabla^2$ is the Hessian on $\Sigma$ and $\tr$ is the trace operator with respect to the induced metric on $\Sigma$.
\begin{defi}\label{maximum}
 We say that the Omori-Yau maximum principle holds on $\Sigma$ for $L$, if for any function $u\in C^2(\Sigma)$ with $\sup_{\Sigma}u<+\infty$, there exists a sequence $\{p_i\}_{i\in\mathbb{N}}\subset\Sigma$ such that  for each $i$, the following holds:
$$(1) u(p_i)>\sup_{\Sigma}u-\frac{1}{i},\quad (2) |\nabla u|(p_i)< \frac 1 i, \quad(3) Lu(p_i)<\frac 1 i.$$
Since $\inf_{\Sigma}u=-\sup_{\Sigma}(-u), $ the above is equivalent to that for any function $u\in C^2(\Sigma)$ with $\inf_{\Sigma}u>-\infty$, there exists a sequence $\{p_i\}_{i\in\mathbb{N}}\subset\Sigma$ such that  for each $i$, the following holds:
$$(1) u(p_i)<\inf_{\Sigma}u+\frac{1}{i}, \quad(2) |\nabla u|(p_i)< \frac 1 i, \quad(3) Lu(p_i)>-\frac 1 i.$$
\end{defi}

\vspace{2mm}
Assume the generalized Omori-Yau maximum principle holds for trace-type operator $L=\tr(-2E_{(k)}\nabla^2)$, one can prove the analogous result for  non-compact hypersurfaces.

\vspace{2mm}
\noindent\textit{Proof of Theorem \ref{mainthm2}:}
By the generalized Omori-Yau maximum principle, we have two sequences $\{p_i\}$ and $\{q_i\}$ with properties
\begin{eqnarray*}&(i)&\lim_{i\rightarrow+\infty}\phi(r(p_i))=\sup_{\Sigma}\phi(r),\; \lim_{i\rightarrow+\infty}\phi(r(q_i))=\inf_{\Sigma}\phi(r);\\
&(ii)&|\nabla\phi(r)|(p_i)=\l(r(p_i))|\nabla r|(p_i)<\frac 1 i,\;|\nabla\phi(r)|(q_i)=\l(r(p_i))|\nabla r|(q_i)<\frac 1 i;\\
&(iii)&\tr\left(-2E_{(k)}\nabla^2 \phi(r)\right)(p_i)<\frac 1 i,\; \tr\left(-2E_{(k)}\nabla^2 \phi(r)\right)(q_i)>-\frac 1 i.\end{eqnarray*}

Since $\phi(r)$ is strictly increasing due to $\phi'(r)=f(r)>0$, we have $$\lim_{i\rightarrow +\infty}r(p_i)=\sup_{\Sigma}r,\quad \lim_{i\rightarrow\infty}r(q_i)=\inf_{\Sigma}r,$$
and thus
$$\lim_{i\rightarrow +\infty}\langle \p_r,\nu\rangle(p_i)=\lim_{i\rightarrow +\infty}\langle \p_r,\nu\rangle(q_i)=1.$$
Using the above facts in (\ref{eqq3}) and letting $i\rightarrow +\infty$, we get
\begin{eqnarray}\label{eqqqq1}
(n-1-2k)(\log \l)'(\sup_{\Sigma}r)\leq \frac{\mathcal{H}_{2k+1}}{\mathcal{H}_{2k}}\leq (n-1-2k)(\log \l)'(\inf_{\Sigma}r).
\end{eqnarray}
By the assumption that $(\log \l)''\geq 0$ with equality only at isolated points,
we obtain the desired result that $r$ is constant. That is, $\Sigma$ is a slice $\{r_0\}\times M$.
\qed

\vspace{2mm}
In the following, we discuss some sufficient condition to guarantee the generalized Omori-Yau maximum principle to hold.
Inspired by Pigola-Rigoli-Setti \cite{PRS2}, Al\'ias, Impera and Rigoli (see \cite{AIR}, Theorem 1 and Corollary 3)  proved that the Omori-Yau maximum principle holds for a trace type elliptic operator $L=\tr(T\circ\nabla^2)$ with positive semi-definite $T$ satisfying $\sup_{\Sigma} \tr T<\infty$ on a Riemannian manifold $\Sigma$, provided that the radial sectional curvature  (the sectional curvature of the $2$-planes containing $\nabla \rho$, where $\rho$ is the distance function on $\Sigma$ from a fixed point in $\Sigma$) of $\Sigma$ satisfies the condition \begin{eqnarray}\label{decaycond}
K^{\mbox{rad}}_{\Sigma}(\nabla \rho, \nabla \rho) > -G(\rho),
\end{eqnarray}
where $G : [0,+\infty)\rightarrow \mathbb{R}$ is
a smooth function satisfying
\begin{eqnarray}\label{condition}
G(0) > 0,\; G(t) > 0,\; \int_{0}^{+\infty} \frac{1}{\sqrt{G(t)}}=+\infty \quad\mbox{ and}\quad
\limsup_{t\rightarrow+\infty}\frac{tG(\sqrt t)}{G(t)}<+\infty.
\end{eqnarray}
A special case for which (\ref{decaycond}) holds is that the sectional curvature of $\Sigma$ is bounded from below (one can choose $G(\rho)=\rho^2$).

In particular, Al\'ias, Impera and Rigoli proved (see \cite{AIR}, Corollary 4)  that for a hypersurface $\Sigma$ in a slab of a warped product manifold  $[r_1,r_2]\times N$,    (\ref{decaycond}) holds for $L$ with positive semi-definite $T$ satisfying $\sup_{\Sigma} \tr T<\infty$, provided that the radial sectional curvature of the fiber manifold $N$ satisfies
\begin{eqnarray}\label{decaycond1}
K^{\mbox{rad}}_{N}(\nabla^N \hat\rho, \nabla^N \hat\rho) > -G(\hat\rho),
\end{eqnarray}
where $\hat\rho$ is  the distance function on the fiber $N$ from a fixed point in $N$ and $G : [0,+\infty)\rightarrow \mathbb{R}$ is
a smooth function  satisfying the conditions listed in (\ref{condition}), together with $\sup_{\Sigma}\|h\|^2<+\infty$ on $\Sigma$. Hence as a direct consequence of Theorem \ref{mainthm2}, we have

\begin{coro}
Let $(M^n, \bar{g})$ be as in Theorem \ref{mainthm2}. Assume that the radial sectional curvature of $N$ satisfies \eqref{decaycond1}. Let $\Sigma^{n-1}$ be a complete, non-compact star-shaped hypersurface  in $M$ which is contained in a slab $[r_1,r_2]\times N$ with $\sup_{\Sigma}\|h\|^2<+\infty$. Assume
 $-2E_{(k)}$ is positive semi-definite on $\Sigma$ and $\sup_\Sigma\mathcal{H}_{2k}<\infty$  on $\Sigma$.
If the quotient $\frac{\mathcal{H}_{2k+1}}{\mathcal{H}_{2k}}$ is constant, then the hypersurface is a slice $\{r_0\}\times N$.
\end{coro}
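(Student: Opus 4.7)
The plan is to obtain the corollary as an immediate application of Theorem \ref{mainthm2}: every hypothesis of Theorem \ref{mainthm2} other than the Omori-Yau maximum principle transfers verbatim from the data of the corollary. Indeed, the warped product together with the assumption $\l\l''-(\l')^2 \geq 0$ with equality only at isolated points is inherited, $\Sigma$ is star-shaped and contained in a slab, $-2E_{(k)}$ is positive semi-definite, and the hypothesis that the quotient $\mathcal{H}_{2k+1}/\mathcal{H}_{2k}$ is a well-defined constant function forces $\mathcal{H}_{2k}$ to be nowhere zero on $\Sigma$. The entire task therefore reduces to verifying that the generalized Omori-Yau principle holds for the trace-type operator $L = \tr_g(-2E_{(k)}\nabla_g^2)$ on $\Sigma$.

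For this verification, I will invoke Corollary~4 of Al\'ias--Impera--Rigoli \cite{AIR}, recalled in the paragraph preceding the corollary's statement. Applied with $T = -2E_{(k)}$, that result guarantees the validity of Omori-Yau on a hypersurface sitting in a slab $[r_1,r_2]\times N$ provided: (i) $T$ is positive semi-definite with $\sup_\Sigma \tr_g T < +\infty$; (ii) the radial sectional curvature of the fiber $N$ satisfies the decay condition \eqref{decaycond1}; and (iii) $\sup_\Sigma \|h\|^2 < +\infty$. Conditions (ii) and (iii), as well as the positive semi-definiteness in (i), are already among the explicit assumptions of the corollary. The trace bound in (i) follows at once from the preliminary identity \eqref{trEk},
\[
\tr_g(-2E_{(k)}) \;=\; (n-1-2k)\mathcal{H}_{2k},
\]
together with the assumed upper bound $\sup_\Sigma \mathcal{H}_{2k} < +\infty$.

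Once the Omori-Yau principle for $L$ has been secured, Theorem \ref{mainthm2} applies word for word and yields that $\Sigma$ is a slice $\{r_0\}\times N$, completing the argument. I do not expect any serious obstacle: the proof is essentially a matching of hypotheses to the AIR criterion via the one-line trace identity \eqref{trEk} from the preliminaries, and no new analytic ingredient beyond Theorem \ref{mainthm2} itself is required. The only point deserving a moment of care is the passage between the tensorial object $-2E_{(k)}$ and the scalar $(n-1-2k)\mathcal{H}_{2k}$, which is exactly what \eqref{trEk} provides.
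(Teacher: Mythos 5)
Your proposal is correct and follows exactly the route the paper intends: the corollary is stated there as a ``direct consequence'' of Theorem \ref{mainthm2} once Corollary~4 of Al\'ias--Impera--Rigoli is invoked with $T=-2E_{(k)}$, and your verification of the hypotheses --- in particular deriving $\sup_\Sigma \tr_g(-2E_{(k)})<+\infty$ from \eqref{trEk} and the bound on $\mathcal{H}_{2k}$, and noting that the well-definedness of the quotient forces $\mathcal{H}_{2k}$ to be nonvanishing --- is precisely the matching of hypotheses the paper leaves implicit.
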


\vspace{2mm}
Following the argument close to the one of the proof of Theorem \ref{mainthm2}, one may  prove the Bernstein-type result in this case.

\vspace{2mm}

\noindent\textit{Proof of Theorem \ref{mainthm3}:}
By the generalized Omori-Yau maximum principle to the height function $r$, there exists a sequence $\{p_i\}\subset\Sigma$ such that
$$\lim_{i\rightarrow\infty} r(p_i)=\sup_{\Sigma} r,\;\lim_{i\rightarrow\infty}|\nabla r|(p_i)=0, \;\mbox{and} \lim_{i\rightarrow\infty} \sup \tr(-2E_{(k)}\nabla^2 r)(p_i)\leq 0.$$
It follows from the  semi-definity of $-2E_{(k)}$ and the positivity of $\mathcal{H}_{2k}$ that
$$0\leq \langle -2E_{(k)} \nabla r,\nabla r\rangle \leq \tr(-2E_{(k)})|\nabla r|^2\leq (n-1-2k)C_2|\nabla r|^2.$$
From the fact $\langle \partial_r,\nu\rangle^2=1-|\nabla r|^2$, we  have
$$\lim_{i\rightarrow\infty}\langle \partial_r,\nu\rangle (p_i)=1,$$
and thus
$$\lim_{i\rightarrow\infty}\langle -2E_{(k)}\nabla r,\nabla r\rangle(p_i) =0.$$

Combining all the above facts together into (\ref{eqq4}), we have
$$0\geq \lim_{i\rightarrow\infty}\sup \tr(-2E_{(k)}\nabla^2 r)(p_i)\geq C_1\lim_{i\rightarrow\infty}\left((n-1-2k)\frac{\l'(r)}{\l(r)}-\frac{\mathcal H_{2k+1}}{\mathcal H_{2k}}\right)(p_i)\geq 0, $$
so that
$$\lim_{i\rightarrow\infty}\left((n-1-2k)\frac{\l'(r)}{\l(r)}-\frac{\mathcal H_{2k+1}}{\mathcal H_{2k}}\right)(p_i)=0.$$
From the hypothesis, we have $\inf_{\Sigma}\left((n-1-2k)\frac{\l'(r)}{\l(r)}-\frac{\mathcal H_{2k+1}}{\mathcal H_{2k}}\right)=0,$
and thus $|\nabla r|\equiv0$ on $\Sigma$, which yields that $\Sigma$ is a slice $\{r_0\}\times M$ for some $r_0\in [0,\bar{r})$.
\qed

\

{\appendix

\section{Kottler-Schwarzschild manifolds}

The Kottler manifolds, or Kottler-Schwarzschild manifolds, are analogues of the Schwarzschild space
in the setting of asymptotically locally hyperbolic manifolds.  For $\kappa=1, 0\hbox{ or }-1$, let $(N(\kappa),\hat g)$ be a closed space form of constant sectional curvature $\kappa$. An $n$-dimensional Kottler-Schwarzschild manifold $P_{\kappa,m}= [\rho_{\kappa,m}, \infty)\times N(\kappa)$ is equipped with the metric
\begin{equation}\label{example_g}
g_{\kappa,m}=\frac{d{\rho}^2}{V_{\kappa,m}^2(\rho)}+{\rho}^2\hat g, \quad {V_{\kappa,m}}=\sqrt{{\rho}^2+\kappa-\frac{2m}{{\rho}^{n-2}}}.
\end{equation}
 Let $\rho_0:=\rho_{\kappa,m}$ be the largest positive root of
$$\phi(\rho):=\rho^2+\kappa-\frac{2m}{\rho^{n-2}}=0.$$
Remark that in (\ref{example_g}), in order to have a positive root $\rho_0$, if $\kappa= 0\hbox{ or }1$, the parameter $m$ should be always positive; if $\kappa=-1$, the parameter $m$ can be negative. In fact, in this case, $m$ belongs to the following interval
\begin{eqnarray*}\label{interval}
m\in[m_c,+\infty)\quad\mbox{and}\quad m_{c}=-\frac{(n-2)^{\frac{n-2}{2}}}{n^{\frac n2}}.
\end{eqnarray*}
Here the certain critical value $m_c$ comes from the following. If $m\leq 0$, one can solve the equation
$$\phi'(\rho)=2\rho+(n-2)\frac{2m}{\rho^{n-1}}=0,$$
to get the root $\rho_1=\left(-(n-2)m\right)^{\frac 1n}.$ Note the fact that $\phi(\rho_1)\leq 0$, which yields
$$m\geq-\frac{(n-2)^{\frac{n-2}{2}}}{n^{\frac n2}}.$$

By a change of variable $r=r(\rho)$ with $$r'(\rho)=\frac{1}{V_{\kappa,m}(\rho)},\quad r(\rho_{\kappa,m})=0,$$ we can rewrite $P_{\kappa,m}$ as a warped product manifold $P_{\kappa,m}=[0,\infty)\times_{\lambda_\kappa} N(\kappa)$  equipped with the metric
\begin{eqnarray*}
g_{\kappa,m}:=\bar g:=dr^2+\l_{\kappa}(r)^2\hat g,
\end{eqnarray*}
where $\l_{\kappa}: [0,\infty)\to [\rho_{\kappa,m},\infty)$ is the inverse of $r(\rho)$, i.e., $\l_{\kappa}(r(\rho))=\rho$.

It is easy to check
\begin{eqnarray*}
\l'_{\kappa}(r)&=&V_{\kappa,m}(\rho)=\sqrt{\kappa+\l_{\kappa}(r)^2-2m\l_{\kappa}(r)^{2-n}},\\
\l''_{\kappa}(r)&=&\l_{\kappa}(r)+(n-2)m\l_{\kappa}(r)^{1-n}.
\end{eqnarray*}
Hence
\begin{eqnarray*}
\l_{\kappa}\l''_{\kappa}-(\l'_{\kappa})^2=-\kappa+nm\lambda_\k^{2-n}.\end{eqnarray*}

For the case $\kappa=0$, $m\geq 0$ and hence $\l_{\kappa}\l''_{\kappa}-(\l'_{\kappa})^2=nm\lambda_\k^{2-n}\geq 0$.
For the case $\kappa=-1$, if $m\geq 0$, then $\l_{\kappa}\l''_{\kappa}-(\l'_{\kappa})^2=1+nm\lambda_\k^{2-n}>0$.
If $m\in [-\frac{(n-2)^{\frac{n-2}{2}}}{n^{\frac n2}},0)$, then
\begin{eqnarray*}
\l_{\kappa}\l''_{\kappa}-(\l'_{\kappa})^2&=&1+nm\lambda_\k^{2-n}\geq 1+nm\rho_0^{2-n}\geq 1+nm\rho_1^{2-n}\\&= &1+nm \left(-(n-2)m\right)^{\frac{2-n}{n}}=1-n(n-2)^{\frac{2-n}{n}}(-m)^\frac2n \\&\geq &1-n(n-2)^{\frac{2-n}{n}}\left(\frac{(n-2)^{\frac{n-2}{2}}}{n^{\frac n2}}\right)^\frac2n=0.\end{eqnarray*}

As a conclusion, the condition on the log convexity of $\l$  holds for the Kottler-Schwarzschild manifolds with $\kappa=0$ and $-1$. We remark that the log convexity of $\l$  does not hold for the Kottler-Schwarzschild manifolds with $\kappa=1$.
}

\

\noindent{\bf Acknowledgment.}
Both authors would like to thank  Prof. Guofang Wang for his encouragement and constant support.

\

\end{document}